\newtheorem{definition}{Definition}
\newtheorem{proposition}{Proposition}
\newtheorem{lemma}{Lemma}
\newenvironment{proof}{{\bf Proof:}}{$\text{ }\blacksquare$}
\begin{document}

\title{Higher dimensional operads}
\author{Dennis Borisov\\ Max-Planck Institute for Mathematics, Bonn, Germany\\ dennis.borisov@gmail.com}
\date{\today}
\maketitle

\begin{abstract}
The theory of operads (May, cyclic, modular, PROPs, etc) is extended to include higher dimensional phenomena, i.e. operations between operations, mimicking the algebraic structure on varieties of arbitrary dimensions, having marked subvarieties of arbitrary codimension.
\end{abstract}

\section{Introduction}

Operads (as defined by P.May in \cite{Ma72}) can be described as formalizations of properties of $\{Hom(V^{\otimes^m,},V)\}_{m\geq 0}$, for a generic vector space $V$. Similarly, PROPs are modeled on $\{Hom(V^{\otimes^m},V^{\otimes^n})\}_{m,n\geq 0}$.

This kind of models is not suitable to describe cyclic operads, which are modeled on $\{V^{n+1}\}_{n\geq 0}$, using a bilinear form on $V$.  In turn, this model is not suitable anymore to describe modular operads.

\smallskip

There is one model, that is good for most kinds of operads: real (singular) curves with marked points. Here we consider every curve as an operation, connecting the marked points on it. Having several such curves we can glue them at marked points, or take their disjoint union. 

In some contexts we might consider unoriented curves, and hence arrive at cyclic and modular operads, in other contexts we can orient the curves and obtain PROPs, or more generally wheeled PROPs. This approach to operads was developed in \cite{KM94}, \cite{GK98}, \cite{BeM96}, \cite{BM08}.

\smallskip

All these different theories have one thing in common: they are modeled on $1$-dimensional objects -- curves. This paper deals with the following problem: find the formalization of the algebraic properties of higher dimensional varieties, when we have marked subvarieties of arbitrary codimension, perhaps having non-empty intersections. 

These marked subvarieties can have their own marked sub-subvarieties, and so on. Gluing happens on many levels at once, e.g. if a marked point $P$ sits on two marked curves $C_1$, $C_2$ in a variety, gluing this variety with another one along $C_1$ is accompanied by gluing $C_2$ with something at $P$. 

In this paper we solve this problem, and call the resulting concept {\it nested operads}.

\medskip

To define $1$-dimensional operads using curves, one uses the dual graphs of curves, i.e. if we have a curve $C$ with marked points $\{p_1,\ldots,p_n\}$, the dual graph has one vertex, corresponding to $C$, and $n$ flags coming out of it (corresponding to the marked points). If $p_i=p_j$, the corresponding flags comprise an edge, and so on.

In effect we have two kinds of nodes in dual graphs: vertices, that correspond to curves, and ends of flags, that correspond to points. In the higher dimensional case we obviously have more than two kinds. If we want to define dual graphs here, we get graphs, whose flags are graphs on their own, whose flags are also graphs, and so on. This explains the terms {\it nested} graphs and nested operads.

\smallskip

The following observation gives a simple way to define such ``nested'' graphs: if a variety $L$ has a marked subvariety $M$, which in turn has a marked subvariety $N$, then $N$ is also a marked subvariety of $L$; on the other hand it cannot happen that $L$ is a marked subvariety of $M$.

The transitivity property of inclusion lets us model nested graphs as categories: objects correspond to subvarieties, morphisms to inclusions. The requirement that we do not have cycles implies that these categories have to be direct, i.e. without non trivial composable sequences of morphisms, starting and ending at the same point.

\smallskip

As with the dual graphs of curves, the hard part in defining the corresponding theory of operads is not in the definition of graphs, but in the definition of morphisms between graphs (see \cite{BM08} for the $1$-dimensional case). 

Defining the category of nested graphs is one of the main goals of this paper. Actually it is a double category, reflecting the fact that we should not only know how to glue varieties, but also how to embed one into another. 

\smallskip

The other goal is to describe the additional structure on the resulting category, that allows us to define nested operads. In the $1$-dimensional case it is a monoidal structure, given by disjoint union, and it lets us define operads of all kinds as representations of the category of graphs, i.e. as monoidal functors $\Gamma\rightarrow\mathcal G$, where $\Gamma$ is the category of labeled graphs, that we chose, and $\mathcal G$ is any monoidal category (see \cite{BM08} for details).

In the nested case, the structure is more complicated. Given two varieties $U$ and $V$, we can glue them along $0$-dimensional subvarieties, or $1$-dimensional, $2$-dimensional and so on. Having just one monoidal product on the category of nested graphs is not enough. 

Instead of several monoidal products we use one action of the club of simplicial sets $\underline{sset}$. The corresponding definitions are given in \cite{Bo10}. This allows us to define nested operads as representations of categories of labeled nested graphs, i.e. as functors $N\Gamma\rightarrow\mathcal G$, where $\mathcal G$ is also a $\underline{sset}$-algebra, and the functor respects the action of $\underline{sset}$.


\section{The category of nested graphs}

\begin{definition}\label{DefinitionGraph} {\bf A nested graph} is a small category $\mathcal N$, s.t. there is at least one functor ({\bf a grading})
	$$G:\mathcal N\rightarrow\underline{n},$$
where $\underline{n}$ is a finite ordinal\footnote{We consider $\underline{n}$ as a category where $Hom(i,j)$ is empty if $i>j$, and $Hom(i,j)=pt$ otherwise.} and $G$ maps non-identity morphisms to non-identity morphisms.\end{definition} 

To simplify our dealings with nested graphs we introduce some terminology in the following definition. It is obviously modeled on the terminology, used in the theory of usual graphs. 

To make the connection we would like to note that $1$-dimensional graphs are a special kind of nested ones, namely those that have a grading into the ordinal $\underline{2}=\{0,1\}$. Here vertices are objects in $G^{-1}(1)$, flags are the non-identity morphisms, and two flags build up an edge (or a hyper-edge for more than two), if the corresponding morphisms have the same domain.

\begin{definition}\label{GraphKinds}\begin{itemize}
\item {\bf A flag} in a nested graph is a non-identity morphism. {\bf An irreducible flag} is a flag, that cannot be written as a composition of two flags.
\item Objects in a nested graph will be called {\bf nodes}. We consider every flag as being {\bf decorated} by its domain. If a node is the codomain of a flag, we will say that {\bf the flag is attached to the node}.
\item {\bf A vertex} in a graph is a node, that does not decorate any flag. {\bf A corolla} is a nested graph with a unique vertex.
\item A functor $\phi:\mathcal N_1\rightarrow\mathcal N_2$ {\bf contracts a flag} $f\in Mor(\mathcal N_1)$ if $\phi(f)$ is an identity. {\bf A node $A$ is contracted by a functor}, if it decorates a flag, that is contracted.\end{itemize}\end{definition}
It is clear that in a $1$-dimensional graph all flags are irreducible. In dimension higher than $1$ we obviously have reducible flags, e.g. a marked point belonging to a surface can also belong to a marked curve on that surface.

We will define morphisms between nested graphs using functors between the categories. However, not all functors will be allowed. 

\begin{definition} A functor $\phi:\mathcal N_1\rightarrow\mathcal N_2$ will be called {\bf admissible} if:\begin{itemize}
\item[1.] for any irreducible flag $f\in\mathcal N_1$, $\phi(f)$ is either identity or an irreducible flag,
\item[2.] for any irreducible flag $f:A\rightarrow B$ in $\mathcal N_1$, if $\phi$ contracts $f$, it contracts all irreducible flags, decorated by $A$. \end{itemize}\end{definition}
Geometric meaning of these conditions is obvious: \begin{itemize}
\item[1.] If we have a variety $U$, and a marked subvariety $V\subset U$, s.t. there is no marked subvariety in between, then after gluing $U$ with anything, this property remains, i.e. gluing cannot insert anything between $U$ and $V$.
\item[2.] If we glue several varieties $\{U_i\}$ along a marked subvariety $V\subset U_i$, then after gluing, $V$ ceases to be marked in all of $U_i$'s. In other words, when we contract an edge (or a hyper-edge), we contract all flags involved.
\end{itemize}

The following lemma is immediate.

\begin{lemma} Composition of two admissible functors is admissible.\end{lemma}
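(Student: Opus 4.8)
The plan is to fix admissible functors $\phi:\mathcal N_1\to\mathcal N_2$ and $\psi:\mathcal N_2\to\mathcal N_3$ and to verify directly the two clauses of the definition of admissibility for the composite $\psi\circ\phi$. Both clauses reduce to a short case analysis driven by condition 1 for $\phi$, which guarantees that every irreducible flag of $\mathcal N_1$ is sent by $\phi$ either to an identity or to an irreducible flag of $\mathcal N_2$.

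For condition 1, I would take an irreducible flag $f$ in $\mathcal N_1$ and consider $\phi(f)$. If $\phi(f)$ is an identity, then $(\psi\circ\phi)(f)=\psi(\phi(f))$ is an identity and there is nothing more to check. If $\phi(f)$ is an irreducible flag, then admissibility of $\psi$ applied to it shows that $\psi(\phi(f))$ is again either an identity or an irreducible flag. Either way $(\psi\circ\phi)(f)$ is an identity or an irreducible flag, which is condition 1 for the composite.

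For condition 2, I would start with an irreducible flag $f:A\to B$ contracted by $\psi\circ\phi$ together with an arbitrary irreducible flag $g$ decorated by $A$, and show that $\psi\circ\phi$ contracts $g$ as well. I split on whether $\phi$ already contracts $f$. If $\phi(f)$ is an identity, then condition 2 for $\phi$ contracts every irreducible flag decorated by $A$, in particular $g$, so $\phi(g)$ is an identity and hence so is $(\psi\circ\phi)(g)$. The substantive case is when $\phi(f)$ is a genuine flag: by condition 1 for $\phi$ it is irreducible, and it is decorated by $\phi(A)$, its domain. Since $(\psi\circ\phi)(f)$ is an identity, $\psi$ contracts this irreducible flag, so condition 2 for $\psi$ forces $\psi$ to contract every irreducible flag decorated by $\phi(A)$. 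Now apply condition 1 for $\phi$ to $g$: either $\phi(g)$ is already an identity, in which case $(\psi\circ\phi)(g)$ is an identity, or $\phi(g)$ is an irreducible flag whose domain is $\phi(A)$ (because $g$ and $f$ share the domain $A$), and it is therefore contracted by $\psi$. In all subcases $(\psi\circ\phi)(g)$ is an identity, which completes condition 2.

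The main obstacle is precisely this last case, where neither $\phi$ nor $\psi$ contracts the flag in a way that visibly propagates: the flag that $\psi$ contracts lives in $\mathcal N_2$, so the two contraction conditions cannot simply be concatenated. The bookkeeping observation that makes it go through is that $\phi$ sends the decorating node $A$ to $\phi(A)$ and sends every irreducible flag decorated by $A$ either to an identity or to an irreducible flag decorated by $\phi(A)$; this is exactly what lets the contraction hypothesis of $\psi$ at $\phi(A)$ be pulled back along $\phi$ to all irreducible flags decorated by $A$.
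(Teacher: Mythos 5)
Your proof is correct. The paper treats this lemma as immediate and gives no argument at all; your direct two-case verification (splitting on whether $\phi$ already contracts the flag $f$, and using condition 1 for $\phi$ to transport the contraction hypothesis for $\psi$ from the node $\phi(A)$ back to all irreducible flags decorated by $A$) is exactly the routine check the paper leaves to the reader.
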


In \cite{BM08} it is shown that any morphism between $1$-dimensional graphs can be uniquely written as a composition of a grafting, followed by  merger, followed by a contraction. 

In the higher dimensional case there is no distinction between grafting and merger, since now both ends of flags are just objects in a category. Therefore a morphism between nested graphs will be defined as a merger, followed by a contraction.

\begin{definition}\label{DefinitionMerCon}\begin{itemize}\item[1.] {\bf A merger} $\mu:\mathcal N_1\rightarrow\mathcal N_2$ is an admissible epi-functor,\footnote{By {\bf an epi-functor} we mean $\mathcal N_1\rightarrow\mathcal N_2$, s.t. image of $\mathcal N_1$ generates all of $\mathcal N_2$.} s.t. $\mathcal N_2$ is a quotient of $\mathcal N_1$ by an equivalence relation on objects of $\mathcal N_1$.
\item[2.] {\bf A contraction} $\kappa:\mathcal N_1\rightarrow\mathcal N_2$ is an admissible epi-functor, s.t. for every node $A_2\in\mathcal N_2$, the fiber $\kappa^{-1}(A_2)$ is a corolla.\end{itemize}\end{definition} 

\begin{lemma} Composition of mergers is a merger, composition of contractions is a contraction.\end{lemma}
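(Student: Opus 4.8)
The plan is to handle the two statements in parallel as far as the shared structure allows, and then do the real work for contractions. Both a merger and a contraction are admissible epi-functors, and these two properties pass to composites: admissibility by the previous lemma, and the epi condition because if $\phi:\mathcal N_1\rightarrow\mathcal N_2$ and $\psi:\mathcal N_2\rightarrow\mathcal N_3$ have images generating their targets, then $\psi$ carries a generating family of $\mathcal N_2$ to one generating $\mathcal N_3$ while $\phi$ surjects onto the generators of $\mathcal N_2$, so $\psi\phi(\mathcal N_1)$ generates $\mathcal N_3$. Thus in each case it remains only to verify the extra defining clause.

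For mergers this is the ``quotient of a quotient'' fact. If $\mu_1$ presents $\mathcal N_2$ as the quotient of $\mathcal N_1$ by an equivalence relation $\sim_1$ on $\mathrm{Ob}(\mathcal N_1)$, and $\mu_2$ presents $\mathcal N_3$ as the quotient of $\mathcal N_2$ by $\sim_2$ on $\mathrm{Ob}(\mathcal N_2)=\mathrm{Ob}(\mathcal N_1)/{\sim_1}$, I would define $\sim$ on $\mathrm{Ob}(\mathcal N_1)$ by declaring $A\sim A'$ iff $[A]_{\sim_1}\sim_2[A']_{\sim_1}$. This is the pullback of $\sim_2$ along the projection, hence an equivalence relation containing $\sim_1$, with $\mathrm{Ob}(\mathcal N_1)/{\sim}$ canonically $\mathrm{Ob}(\mathcal N_3)$. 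By the universal property of quotient categories the same iteration identifies $\mathcal N_3$ with $\mathcal N_1/{\sim}$, so $\mu_2\mu_1$ is again a quotient by an equivalence relation on objects, i.e. a merger.

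The substance is in the contractions. Writing $\kappa_1:\mathcal N_1\rightarrow\mathcal N_2$, $\kappa_2:\mathcal N_2\rightarrow\mathcal N_3$ and fixing a node $A_3\in\mathcal N_3$, I first note that the fiber of the composite is the iterated fiber $(\kappa_2\kappa_1)^{-1}(A_3)=\kappa_1^{-1}(\mathcal C)$, where $\mathcal C:=\kappa_2^{-1}(A_3)$ is a corolla with vertex $B_3$. Restricting a grading of $\mathcal N_1$ exhibits this fiber as a nested graph, and being finite and graded it certainly has sinks, i.e. vertices, so the whole problem is to show that it has a \emph{unique} one; the candidate is the vertex $V$ of the corolla $\kappa_1^{-1}(B_3)$. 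That $V$ is a sink of the total fiber is direct: an outgoing flag from $V$ maps under $\kappa_1$ either to the identity of $B_3$, making it a flag out of $V$ inside $\kappa_1^{-1}(B_3)$, or to a flag out of $B_3$ inside $\mathcal C$, each contradicting that $V$ and $B_3$ are vertices. Conversely any sink lying over $B_3$ is already a sink of $\kappa_1^{-1}(B_3)$, hence equals $V$.

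The main obstacle is to rule out a sink $X$ lying over a node $B\neq B_3$. If $X$ is not the vertex $V_B$ of $\kappa_1^{-1}(B)$ it has an outgoing flag inside that fiber, which $\kappa_1$ contracts and which therefore lies in the total fiber, so $X$ is not a sink. The delicate case is $X=V_B$, where I must lift an outgoing flag of $B$ to one of $V_B$. Here is the key step: since $B\neq B_3$ there is a flag $h\colon B\to B_3$ in $\mathcal C$, and since $\kappa_1$ is epi, $h$ factors through the image, yielding a flag $g$ of $\mathcal N_1$ with domain over $B$ and with $\kappa_1(g)$ a nonidentity factor of $h$ — hence, by the graded observation that factors of a $\kappa_2$-contracted morphism are themselves contracted, a morphism lying in $\mathcal C$. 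Walking along the irreducible factors of $g$ to the first one $\gamma$ that $\kappa_1$ does not contract, its domain still lies over $B$; admissibility condition $2$ (contraction of irreducible flags is all-or-nothing at each node) then forces that domain to have no $\kappa_1$-contracted outgoing irreducible flag at all, i.e. to be the unique sink $V_B$ of $\kappa_1^{-1}(B)$. Thus $\gamma$ is an outgoing flag of $V_B$ whose image lies in $\mathcal C$, so $\gamma$ belongs to the total fiber and $V_B$ is not a sink. This yields uniqueness of the vertex, so every fiber of $\kappa_2\kappa_1$ is a corolla and the composite is a contraction. I expect the bookkeeping of this ``walk to the first non-contracted irreducible factor'' to be the only genuinely fiddly point.
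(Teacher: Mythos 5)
Your proof is correct and takes essentially the same route as the paper: your ``delicate case'' is exactly the paper's key claim, which it states as a standalone lifting lemma (any flag out of a node $A_2$ lifts, up to division, to a flag decorated by the vertex of $\kappa^{-1}(A_2)$, proved via epi-ness, passage to the first non-contracted irreducible factor, and admissibility condition 2) and then applies to a $\kappa_2$-contracted flag just as you do inline. Your write-up is marginally more complete in that you explicitly verify that $V$ is itself a vertex of the composite fiber, a point the paper leaves implicit.
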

\begin{proof} It is obvious that compositions of mergers are mergers. To prove the statement for contractions, consider a contraction $\kappa:\mathcal N_1\rightarrow\mathcal N_2$, let $A_2\in\mathcal N_2$ be any node, and let $A_1\in\mathcal N_1$ be the only vertex in $\kappa^{-1}(A_2)$. We claim that for any flag $f_2:A_2\rightarrow B_2$ in $\mathcal N_2$ there is a flag $f_1:A_1\rightarrow B_1$ in $\mathcal N_1$, s.t. $\kappa(f_1)$ divides $f_2$.

Indeed, since $\kappa(\mathcal N_1)$ generates all of $\mathcal N_2$, there is a flag $g_1\in\mathcal N_1$, that is not contracted by $\kappa$, s.t. $\kappa(g_1)$ is decorated by $A_2$ and $\kappa(g_1)$ divides $f_2$. Moreover, we can choose $g_1$ to be irreducible. Then, since $\kappa$ is admissible, the domain of $g_1$ cannot be contracted by $\kappa$, and therefore $g_1$ has to be decorated by $A_1$.

Now let $\mathcal N_1\overset{\kappa_1}\rightarrow\mathcal N_2\overset{\kappa_2}\rightarrow\mathcal N_3$ be two contractions, and let $A_3\in\mathcal N_3$ be any node. Let $A_2\in\mathcal N_2$ be the only vertex in $\kappa_2^{-1}(A_3)$, and let $A_1$ be the only vertex in $\kappa_1^{-1}(A_2)$. We claim that $A_1$ is the only vertex in $\kappa_1^{-1}(\kappa_2^{-1}(A_3))$.

Indeed, Let $B_2\in\kappa_2^{-1}(A_3)$ be any node different from $A_2$. Since $A_2$ is the unique vertex in $\kappa_2^{-1}(A_3)$, there is a flag $f_2\in\mathcal N_2$, decorated by $B_2$, s.t. it is contracted by $\kappa_2$. Let $B_1$ be the unique vertex in $\kappa_1^{-1}(B_2)$. As we have seen above, there is a flag $f_1\in\mathcal N_1$, decorated by $B_1$, s.t. $\kappa_1(f_1)$ divides $f_2$. Then obviously $f_1$ is contracted by $\kappa_2\circ\kappa_1$, and therefore $B_1$ cannot be a vertex in $\kappa_1^{-1}(\kappa_2^{-1}(A_3))$.\end{proof}

\smallskip

The previous lemma tells us how to compose mergers with mergers and contractions with contractions. To define compositions of pairs (contraction, merger), we need to know what is a contraction, followed by a merger. The following lemma provides an answer.

\begin{lemma}\label{LemmaDecomposition} Any admissible epi-functor between nested graphs can be written (in a non-unique way) as a composition of a merger, followed by a contraction.\end{lemma}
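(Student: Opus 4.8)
The plan is to start from an admissible epi-functor $\phi:\mathcal N_1\rightarrow\mathcal N_2$ and read off from its fibers exactly which node-identifications are ``gluings'' (to be performed by the merger) and which are ``collapses'' (to be performed by the contraction). For each node $A_2\in\mathcal N_2$ consider the fiber $F_{A_2}:=\phi^{-1}(A_2)$, the full subcategory of $\mathcal N_1$ on the objects sent to $A_2$. Because $\mathcal N_1$ is graded, any morphism between two objects of $F_{A_2}$ is sent by $\phi$ to a morphism $A_2\rightarrow A_2$, hence to an identity; so $F_{A_2}$ consists precisely of the flags contracted by $\phi$ together with the identities, and is itself a gradable nested graph. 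The only obstruction to $\phi$ being a contraction is that $F_{A_2}$ may fail to be a corolla, i.e. may have more than one vertex; the merger is the step that cures this.

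First I would define an equivalence relation $\approx$ on objects of $\mathcal N_1$ whose non-trivial classes are, for each $A_2$, the set of all vertices of the fiber $F_{A_2}$, every other node forming a singleton class. The key thing to record is that $\approx$ never relates the domain and the codomain of a flag: the domain of a flag decorates that flag and so is never a vertex of any fiber, whereas only vertices get merged. Consequently the quotient functor $\mu:\mathcal N_1\rightarrow\mathcal N:=\mathcal N_1/\!\approx$ contracts no flag, and it is a quotient by an equivalence relation on objects by construction. Then I would verify admissibility of $\mu$: condition~2 is vacuous since $\mu$ contracts nothing, and condition~1 amounts to checking that merging vertices cannot turn an irreducible flag into a reducible one. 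The point here is that a new intermediate factorization would have to pass through a merged class, but a merged class is built from vertices, which have no outgoing flags, so no such factorization can arise.

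Next, since $\phi(A)=\phi(A')$ whenever $A\approx A'$, the functor $\phi$ is constant on $\approx$-classes and therefore factors uniquely as $\phi=\kappa\circ\mu$ for a functor $\kappa:\mathcal N\rightarrow\mathcal N_2$; as $\phi$ is an admissible epi-functor and $\mu$ is surjective, $\kappa$ inherits admissibility and the epi property. The crucial computation is that $\kappa$ is a contraction, i.e. that each fiber $\kappa^{-1}(A_2)=F_{A_2}/\!\approx$ is a corolla. Writing $v$ for the single class obtained by merging all vertices of $F_{A_2}$, I would argue that $v$ is a vertex of the quotient (it has no outgoing flag, since all the original vertices had none and, as above, composition through a merged class produces none), while every other node $[A]$ is the image of a non-vertex $A$ and hence still decorates the image $[f]$ of a flag $f:A\rightarrow B$ of $F_{A_2}$; this image is a genuine flag because $[A]\neq[B]$ in every case. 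Thus $v$ is the unique vertex and $\kappa^{-1}(A_2)$ is a corolla.

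The last point to settle, and the step I expect to be the main obstacle, is purely about the quotient category $\mathcal N$: one must check that it is again a nested graph (that a grading exists) and that its morphism sets are controlled enough to make the two admissibility arguments and the ``$v$ has no outgoing flag'' claim rigorous. For gradability I would combine a grading $G_2$ of $\mathcal N_2$ with within-fiber gradings of the corollas $\kappa^{-1}(A_2)$ lexicographically, using finiteness of the target ordinals to bound the fiber lengths uniformly, so as to produce a grading $\mathcal N\rightarrow\underline{k}$ that sends every non-identity morphism to a non-identity (the first coordinate strictly increases when $\kappa$ does not contract, the second when it does). The recurring technical nuisance is that a quotient of a category by an equivalence relation on objects can a priori create unwanted composite morphisms; the entire argument hinges on the single observation that $\approx$ identifies only vertices, which are sinks for flags, so that no new non-identity morphism is created out of a merged class and irreducibility is preserved.
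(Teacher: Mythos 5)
Your decomposition is the same canonical one the paper constructs: in each fiber $\phi^{-1}(A_2)$ merge all vertices of that fiber into a single node, pass to the quotient $\mathcal N_1/\!\approx$, and factor $\phi=\kappa\circ\mu$ through it. The gap lies in the observation you yourself single out as the one ``the entire argument hinges on,'' namely that vertices of fibers ``have no outgoing flags,'' so that merged classes are sinks and ``no new non-identity morphism is created out of a merged class.'' This is false: a vertex of the fiber $F_{A_2}$, regarded as a nested graph, is a node that decorates no flag \emph{of the fiber}, i.e.\ no flag contracted by $\phi$; it can perfectly well decorate flags of $\mathcal N_1$ that are not contracted, and in the typical merger it does. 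Take $\mathcal N_1$ to be two disjoint flags $p_1\rightarrow v_1$ and $p_2\rightarrow v_2$, and let $\phi$ identify $p_1$ with $p_2$ (gluing two curves at a marked point), so that $\mathcal N_2$ consists of $q\rightarrow v_1$ and $q\rightarrow v_2$. The fiber over $q$ is the discrete category $\{p_1,p_2\}$, so both $p_i$ are vertices of it and must be merged, yet each decorates a flag, and after merging the class $[p]$ decorates two flags. If one instead takes your claim literally and merges only nodes with no outgoing flags in $\mathcal N_1$, then nothing at all gets merged here, $\kappa=\phi$, and the fiber $\kappa^{-1}(q)=\{p_1,p_2\}$ has two vertices, so $\kappa$ is not a contraction: the construction fails on the simplest merger there is.

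This is a genuine gap rather than loose wording, because the false claim is the entirety of your proof of admissibility condition 1 for $\mu$ (that merging cannot make an irreducible flag reducible), and it is also your stated resolution of the quotient-category ``technical nuisance'' that you correctly identify as the crux. A correct argument must use the structure of the quotient: its morphisms are classes of chains of morphisms of $\mathcal N_1$, consecutive entries matching only up to $\approx$, subject only to the compositions already present in $\mathcal N_1$. A factorization $\mu(f)=g_2\circ g_1$ into flags is represented by such a chain of length at least two from $[A]$ to $[B]$; either all consecutive entries genuinely match in $\mathcal N_1$, in which case $f$ factors into flags in $\mathcal N_1$, contradicting irreducibility, or the chain contains a jump between two distinct identified nodes, and no relation can compose such a jump away, so the chain can never equal the length-one chain $\mu(f)$. (The paper dismisses this step as ``immediately obvious''; whatever one thinks of that, your sink argument does not prove it, and new morphisms out of merged classes really are created in general, e.g.\ when two chains $P_i\rightarrow C_i\rightarrow S_i$ are glued along both the $P_i$'s and the $C_i$'s.) The same confusion makes ``$\kappa$ inherits admissibility'' too quick: condition 2 for $\kappa$ requires knowing that the domain of a $\phi$-contracted flag is never merged, i.e.\ has a singleton $\mu$-preimage, which is exactly the argument the paper gives, and it depends on the fiber-relative notion of vertex. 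By contrast, your corolla argument, read with ``outgoing flag'' meaning ``outgoing flag of the fiber,'' is sound, and your lexicographic grading is a workable alternative to the paper's no-cycles argument.
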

\begin{proof} We will not only prove that this is possible, we will construct canonical decompositions, that will be used later to organize nested graphs into a category. We do it by requiring the merger to be as small as possible. 

Let $\phi:\mathcal N_1\rightarrow\mathcal N_2$ be an admissible epi-functor. It defines an equivalence relation on $Obj(\mathcal N_1)$ as follows: for $A_1\neq B_1$, $A_1\sim B_1$ iff $\phi(A_1)=\phi(B_1)$ and both $A_1$ and $B_1$ are vertices in $\phi^{-1}(\phi(A_1))$. Let $\mathcal N'_2$ be the category, obtained by identifying equivalent nodes in $\mathcal N_1$, and morphisms being freely generated by morphisms in $\mathcal N_1$, subject to relations of composition in $\mathcal N_1$.

In addition to projection $\mu:\mathcal N_1\rightarrow\mathcal N'_2$, there is an obvious $\kappa:\mathcal N'_2\rightarrow\mathcal N_2$, s.t. $\phi=\kappa\circ\mu$. It is clear that both $\mu$ and $\kappa$ are epi-functors. We claim that $\mathcal N'_2$ is a nested graph, $\mu$ is a merger, and $\kappa$ is a contraction.

Suppose $\mathcal N'_2$ is not a nested graph, i.e. there is a cycle of flags $\overset{f^{(n)}_2}\rightarrow\dots\overset{f'_2}\rightarrow$. Since $\mu$ is an epi-functor, we can assume that $\forall i\quad f_2^{(i)}=\mu(f^{(i)}_1)$, for some $f^{(i)}_1:A^{(i)}_1\rightarrow B^{(i)}_1$ in $\mathcal N_1$. Since $\mathcal N_1$ is a direct category there is an $i$, s.t. $B^{(i)}_1\neq A^{(i+1)}_1$. However, $\mu(B^{(i)}_1)=\mu(A^{(i+1)}_1)$, because $f^{(i)}_2$ and $f^{(i+1)}_2$ are composable. Then, by definition of $\mu$, $B^{(i)}_1$ and $A^{(i+1)}_1$ are vertices in the same fiber of $\phi$, but $f^{(i+1)}_1$ is contracted by $\phi$, since $f^{(i+1)}_2$ is contracted by $\kappa$ ($\mathcal N_2$ is a direct category), so $A^{(i+1)}_1$ cannot be a vertex -- contradiction.

Since $\mu$ is defined by identifying nodes, it is clearly a merger. Let $A_2\in\mathcal N_2$ be a node. The fiber $\kappa^{-1}(A_2)$ has a unique vertex, because it is obtained by identifying all vertices in $\phi^{-1}(A_2)$.

It remains to prove that $\kappa$ is an admissible functor. It is immediately obvious that irreducible flags in $\mathcal N'_2$ are precisely the images under $\mu$ of irreducible flags in $\mathcal N_1$. Therefore, it is clear that $\kappa$ maps irreducible flags to irreducible flags. 

Let $f_2:A_2\rightarrow B_2$ be an irreducible flag, that is contracted by $\kappa$. Then there is an irreducible flag $f_1:A_1\rightarrow B_1$ in $\mathcal N_1$, contracted by $\phi$, and s.t. $\mu(f_1)=f_2$. Clearly $A_1$ is a not a vertex in a fiber of $\phi$, and therefore, by definition of $\mu$, $\mu^{-1}(A_2)=A_1$, and consequently $\kappa$ contracts all irreducible flags, decorated by $A_2$.\end{proof}

\begin{definition} {\bf The category of nested graphs} $NGr$ is defined as follows:\begin{itemize}
\item[1.] Objects are nested graphs, as in Definition \ref{DefinitionGraph}.
\item[2.] Morphisms from $\mathcal N_1$ to $\mathcal N_2$ are pairs $\mathcal N_1\overset{\mu}\rightarrow\overset{\kappa}\rightarrow\mathcal N_2$,\footnote{To be precise, we mean here {\it equivalence classes of pairs}, where two pairs are equivalent if one can be obtained from the other by shifting an isomorphism from the merger to the contraction.} where $\mu$ is a merger and $\kappa$ is a contraction, as in Definition \ref{DefinitionMerCon}.
\item[3.] Composition $(\kappa_2,\mu_2)\circ(\kappa_1,\mu_1)$ is $(\kappa_2\kappa',\mu'\mu_1)$, where $(\kappa',\mu')$ is the decomposition (according to Lemma \ref{LemmaDecomposition}) of $\mu_2\kappa_1$.\end{itemize}\end{definition}

\begin{proposition} Defined as above, $NGr$ is a category.\end{proposition}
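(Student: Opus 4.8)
The plan is to verify the category axioms for $NGr$: that composition is well-defined (independent of the chosen representatives and the chosen decomposition in Lemma \ref{LemmaDecomposition}), that identities exist, and that composition is associative. First I would check identities: the identity morphism on $\mathcal N$ is the pair consisting of the identity merger and identity contraction, and since composing with it requires decomposing $\mu_2\circ\mathrm{id}$ or $\mathrm{id}\circ\kappa_1$, which are already of the correct form, the canonical decomposition returns them unchanged. Next I would address well-definedness: the decomposition in Lemma \ref{LemmaDecomposition} is explicitly described as \emph{canonical} (the merger is taken as small as possible, via the specific equivalence relation on objects), so although the lemma states decompositions are non-unique in general, the particular construction used in the definition of composition is a genuine function of the input epi-functor. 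I would confirm that the equivalence-class convention in the footnote of the definition (shifting isomorphisms between merger and contraction) makes composition well-defined on equivalence classes.

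The substance of the proof is associativity. Given three composable morphisms $(\kappa_1,\mu_1)$, $(\kappa_2,\mu_2)$, $(\kappa_3,\mu_3)$, I must show that the two bracketings yield the same pair. The key observation is that a pair $(\kappa,\mu)$ is simply a canonical factorization of the underlying admissible epi-functor $\phi=\kappa\circ\mu$. So the natural strategy is: (i) show that the composite of two morphisms $(\kappa_2,\mu_2)\circ(\kappa_1,\mu_1)$, as a pair, is exactly the canonical merger-contraction factorization of the underlying composite epi-functor $\kappa_2\mu_2\kappa_1\mu_1$; and (ii) observe that since composition of underlying functors is associative, both bracketings of the three morphisms factor the \emph{same} underlying epi-functor $\kappa_3\mu_3\kappa_2\mu_2\kappa_1\mu_1$, and hence produce the same canonical pair. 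This reduces associativity in $NGr$ to the single fact that the canonical factorization is associative, i.e. compatible with composition of admissible epi-functors.

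To carry out step (i), I would need a uniqueness statement for the canonical factorization that is not fully isolated in the excerpt: namely, that for any admissible epi-functor $\phi$, the canonical decomposition $\phi=\kappa\circ\mu$ (with $\mu$ the minimal merger defined by the equivalence relation $A_1\sim B_1$ iff $\phi(A_1)=\phi(B_1)$ and both are vertices in the fiber) is the \emph{unique} merger-contraction factorization with $\mu$ minimal, up to the isomorphism-shifting equivalence. Then I would verify that when one composes the canonical pairs $(\kappa_2,\mu_2)$ and $(\kappa_1,\mu_1)$ according to rule 3, decomposing $\mu_2\kappa_1$ as $\kappa'\mu'$ and forming $(\kappa_2\kappa',\mu'\mu_1)$, the resulting merger $\mu'\mu_1$ is exactly the minimal merger of the total composite $\kappa_2\mu_2\kappa_1\mu_1$; this uses that composition of mergers is a merger and composition of contractions is a contraction (the preceding lemma), so $\mu'\mu_1$ is indeed a merger and $\kappa_2\kappa'$ a contraction.

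The hard part will be step (i): proving that the minimal merger of the total composite coincides with $\mu'\mu_1$, because this requires showing that the equivalence relation defining the minimal merger is transitive across the intermediate stage and that no ``premature'' identifications or spurious vertices arise when the merger $\mu_2$ meets the contraction $\kappa_1$. Concretely, one must track which objects become vertices versus decorate flags as one passes through the intermediate graph in the factorization of $\mu_2\kappa_1$, and confirm this matches the vertex/non-vertex classification dictated by the total epi-functor. I anticipate the argument here will mirror the fiber-and-vertex bookkeeping already used in the proof of Lemma \ref{LemmaDecomposition}, invoking admissibility to guarantee that contracted irreducible flags force all sibling flags to contract, so that vertices in fibers behave coherently under composition.
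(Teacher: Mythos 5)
Your proposal is built on the premise that a morphism of $NGr$ ``is simply a canonical factorization of the underlying admissible epi-functor $\phi=\kappa\circ\mu$,'' so that morphisms can be identified with underlying functors and associativity inherited from composition in $Cat$. That premise is false, and it is exactly where the argument breaks. Morphisms of $NGr$ are \emph{arbitrary} pairs (merger, contraction) up to isomorphism-shifting, not canonical ones, and distinct pairs can have the same underlying composite. Concretely, let $\mathcal N_1$ have nodes $A,B,C$ with a single flag $f:A\rightarrow B$ and $C$ isolated, and let $\phi:\mathcal N_1\rightarrow pt$ collapse everything. Pair (a): merge $B\sim C$ to get the corolla $A\rightarrow\{BC\}$, then contract; Pair (b): merge $A\sim C$ to get the corolla $\{AC\}\rightarrow B$, then contract. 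Both are legitimate merger--contraction pairs with underlying functor $\phi$, and no isomorphism shift relates them: an isomorphism $\theta$ with $\theta\mu_a=\mu_b$ would have to send the node $\{BC\}$ both to $B$ and to $\{AC\}$. Pair (a) is the canonical factorization (the vertices of the $\phi$-fiber are $B$ and $C$, while $A$ decorates $f$); Pair (b) is not. So the passage from morphisms of $NGr$ to admissible epi-functors is not injective, and ``both bracketings have the same underlying functor'' does not imply they are equal in $NGr$.

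The same example refutes your step (i) and shows it is inconsistent with your own identity check. Take $(\kappa_1,\mu_1)=(\mathrm{id},\mu_b)$ and $(\kappa_2,\mu_2)=(\kappa_b,\mathrm{id})$. The composition rule decomposes the middle functor $\mathrm{id}$ as $(\mathrm{id},\mathrm{id})$ and returns Pair (b) itself -- the correct unital behaviour, as you note -- yet Pair (b) is \emph{not} the canonical factorization of $\phi$. Hence the rule-3 composite is in general not the canonical factorization of the underlying composite, and the reduction in step (ii) collapses. Note also that the uniqueness statement available in the paper is weaker than the one you need: since mergers are epi-functors, the \emph{contraction} is determined by the merger, but the merger itself is not determined by $\phi$ (as Pairs (a) and (b) show), which is precisely why Lemma \ref{LemmaDecomposition} says the decomposition is non-unique. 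The paper's actual proof never collapses pairs to functors: it keeps the pairs, reduces associativity to the two commutation identities (a pair of mergers past a contraction, and a merger past a composite of contractions), and verifies these by tracking vertices through fibers. That fiber-and-vertex bookkeeping, which your outline defers to the end as an anticipated detail, is the entire content of the proof and cannot be replaced by associativity of functor composition.
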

\begin{proof} Let $\xymatrix{\mathcal N_1\ar[r]^{(\kappa_1,\mu_1)} & \mathcal N_2\ar[r]^{(\kappa_2,\mu_2)} & \mathcal N_3\ar[r]^{(\kappa_3,\mu_3)} & \mathcal N_4}$ be $3$ morphisms. We would like to prove that the two possible compositions coincide. Since composition of mergers is just composition of functors, it is easy to see that we can assume $\mu_1$ to be trivial. Similarly we can assume $\kappa_3$ being trivial. Therefore, all we have to prove is that
	\begin{equation}\label{Associativity} \mu_3\circ((\kappa_2,\mu_2)\circ\kappa_1)=(\mu_3\circ(\kappa_2,\mu_2))\circ\kappa_1.\end{equation}
	
We will start with a simplification: let $\kappa$ be a contraction, and let $\mu_1$, $\mu_2$ be mergers. We claim 
	\begin{equation}\label{Mergers}(\mu_2\mu_1)\circ\kappa=\mu_2\circ(\mu_1\circ\kappa).\end{equation}
Let's denote $(\mu_2\mu_1)\circ\kappa$ by $(\kappa',(\mu_2\mu_1)')$, and $\mu_2\circ(\mu_1\circ\kappa)$ by $(\kappa'',\mu'_2\mu'_1)$. In construction of $(\mu_2\mu_1)'$ we identify vertices in $\kappa$-fibers over nodes, that are identified by $\mu_2\mu_1$; on the right hand side this identification is split in two steps -- first we identify according to $\mu_1$, and then $\mu_2$. Clearly $(\mu_2\mu_1)'=\mu'_2\mu'_1$, and hence (\ref{Mergers}) is true, since any decomposition of a given admissible epi-functor into a merger, followed by a contraction, is determined by the merger.

Equation (\ref{Mergers}) implies that in (\ref{Associativity}) we can assume $\mu_2$ to be trivial. And hence the problem is reduced to proving that for any contractions $\kappa_1,\kappa_2$ and a merger $\mu$ we have
	\begin{equation}\label{Contractions}\mu\circ(\kappa_2\kappa_1)=(\mu\circ\kappa_2)\circ\kappa_1.\end{equation}
Let $A$ be a node in the domain of $\mu$, let $B$ be the vertex in $\kappa_2^{-1}(A)$. The fiber $(\kappa_2\kappa_1)^{-1}(A)$ is a corolla, its vertex is the vertex $C$ in $\kappa_1^{-1}(B)$. If $\mu$ identifies $A$ and $A'$, then the merger $\mu'$, obtained by commuting $\mu$ and $\kappa_2\kappa_1$, identifies $C$ and $C'$. 

On the right hand side we have two steps: first we identify $B$ and $B'$, and then $C$ and $C'$. The result is clearly the same, and so we obtain the same merger on both sides of (\ref{Contractions}), and therefore the same contraction. 

Finally, identities in $NGr$ are obviously pairs where both the merger and the contraction are the identity functors.\end{proof}

\smallskip

It is easy to check that the full subcategory of $NGr$, consisting of nested graphs that have a grading into $\underline{2}$, is precisely the category of usual (hyper-) graphs. If we impose an additional condition that there are at most two morphisms sharing a domain (i.e. if we allow only edges, but no hyper-edges), we obtain the category $Gr$ from \cite{BM08}.

\bigskip

In the next section we introduce an action of the club of simplicial sets on the category $NGr$. To do this we will need to keep track of inclusions of graphs into other graphs (this is to define the ``amalgamation'' in the ``amalgamated'' products). 

So here we extend the category structure on $NGr$ to a double category. This is also present in the $1$-dimensional case $Gr$, but is not used there, since one can get along without the action of simplicial sets.

\begin{definition}\label{Subgraph}{\bf A full subgraph of a nested graph} $\mathcal N$ is a subcategory $\mathcal N'\subseteq\mathcal N$, s.t. if a node belongs to $\mathcal N'$, so do all the flags attached to it.\end{definition} 

\begin{definition}\label{CategoryO}\begin{itemize}
\item[1.]{\bf The category of objects $\mathfrak O$} has nested graphs as objects, and given two nested graphs $\mathcal M,\mathcal N$, {\bf a dependency} $\xymatrix{\mathcal M\ar@{~>}[r] & \mathcal N}$ is an injective functor $\delta:\mathcal M\rightarrow\mathcal N$, s.t. $\delta(\mathcal M)$ is a full subgraph of $\mathcal N$.
\item[2.] {\bf The category of morphisms $\mathfrak M$} has morphisms between nested graphs as objects, and its morphisms are commutative diagrams
	$$\xymatrix{\mathcal N_1\ar[rr]^\mu\ar@{<~}[d]_{\delta_1} && \mathcal N_2\ar[rr]^\kappa\ar@{<~}[d]_{\delta_2} && 
	\mathcal N_3\ar@{<~}[d]_{\delta_3}\\ \mathcal M_1\ar[rr]_{\mu'} && \mathcal M_2\ar[rr]_{\kappa'} && \mathcal M_3}$$
s.t. 
	\begin{equation}\label{CondMorDep}\kappa^{-1}(\delta_3(\mathcal M_3))=\delta_2(\mathcal M_2),\qquad
	\mu^{-1}(\delta_2(\mathcal M_2))=\delta_1(\mathcal N_1).\end{equation}
\end{itemize}\end{definition}

It is straightforward to check that compositions of morphisms are compatible with the source, domain and the identity functors, i.e. that we have a double category structure on $NGr$. 


\section{Action of simplicial sets}\label{Algebras}

In \cite{Bo10}, it is shown that the category of simplicial sets $\underline{SSet}$ has the structure of a club. Moreover, it is shown that the subcategory $\underline{sset}$, consisting of injective maps, is also a club (with respect to a restricted semi-direct product).

Here we extend the monoidal structure on $Gr$, used in \cite{BM08}, to a full action of $\underline{sset}$. At the end of the section we use this fact to define nested operads in any $\underline{sset}$-algebra.

\begin{proposition} There is an action of $\underline{sset}$ on $\mathfrak O$, $\mathfrak M$, compatible with the double category structure.\end{proposition}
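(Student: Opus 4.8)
The plan is to define the action of $\underline{sset}$ on $\mathfrak O$ and $\mathfrak M$ by transporting the monoidal (disjoint-union-based) structure of $Gr$ from \cite{BM08} to the nested setting, and then verify that this assignment is functorial and respects the double-category structure. Recall that a club action of $\underline{sset}$ should assign, to each simplicial set and each tuple of nested graphs, a new nested graph built by taking disjoint copies and gluing them along subgraphs in a pattern dictated by the simplicial data; the injective maps of $\underline{sset}$ encode which copies get amalgamated. First I would make this precise: given a simplicial set $S$ and nested graphs $\mathcal N_1,\ldots,\mathcal N_k$ (one for each ``input''), I would form the nested graph whose nodes are indexed by pairs consisting of a simplex of $S$ together with a node of the appropriate $\mathcal N_i$, with morphisms induced levelwise. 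The key point is that the amalgamation along overlapping simplices is exactly a gluing along full subgraphs, which is why the dependency relation $\xymatrix{\mathcal M\ar@{~>}[r]&\mathcal N}$ from Definition \ref{CategoryO} is built into the data.

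The main structural steps are then as follows. First I would check that the output of this construction is genuinely a nested graph, i.e.\ that it admits a grading into some finite ordinal mapping non-identities to non-identities; this follows because the grading can be taken as the product/maximum of the gradings on the pieces, since gluing along full subgraphs does not introduce cycles (the same directness argument used in Lemma \ref{LemmaDecomposition} applies). Second, I would verify that a map of simplicial sets, and a tuple of $NGr$-morphisms on the inputs, induce a well-defined $NGr$-morphism on the output, by decomposing each input morphism as a merger followed by a contraction and checking that these glue compatibly across simplices; here the associativity Proposition for $NGr$ guarantees that the induced composite is independent of the order in which we process simplices. Third, for $\mathfrak O$ I would check that dependencies (injective full-subgraph inclusions) are preserved: if $\delta:\mathcal M\hookrightarrow\mathcal N$ is a dependency, then the induced map on the glued graphs is again an injective full-subgraph inclusion, because full-subgraph inclusions are stable under the levelwise construction.

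The most delicate step, and the one I expect to be the main obstacle, is the compatibility of the action on $\mathfrak M$ with the two pullback-type conditions in (\ref{CondMorDep}), namely $\kappa^{-1}(\delta_3(\mathcal M_3))=\delta_2(\mathcal M_2)$ and $\mu^{-1}(\delta_2(\mathcal M_2))=\delta_1(\mathcal N_1)$. I must show that when I apply the $\underline{sset}$-action to a morphism in $\mathfrak M$, the resulting commutative square of mergers and contractions still satisfies these exact-fiber conditions. This requires understanding how contraction fibers (which are corollas) and merger fibers behave under the gluing: concretely, I would argue that because the action glues along full subgraphs, taking a fiber of $\mu$ or $\kappa$ commutes with the gluing construction, so the preimage of a glued subgraph is the gluing of the preimages, which is precisely the content of (\ref{CondMorDep}). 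The subtlety is that the decomposition of an admissible epi-functor into merger-then-contraction is only canonical (Lemma \ref{LemmaDecomposition}), not strictly natural, so I would need to check that the chosen canonical decompositions on the glued graph agree with the gluing of the canonical decompositions on the pieces; establishing this naturality of the canonical merger (it is determined by identifying vertices in contraction fibers) is the crux of the argument.
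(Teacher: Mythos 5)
Your construction of the action itself is essentially the paper's: the paper realizes the gluing as the canonical colimit action of $SSet$ on $Cat$ (defined on generators, namely corollas for $\mathfrak O$ and morphisms with corolla codomains for $\mathfrak M$), and your ``pairs (simplex, node) amalgamated along full subgraphs'' is a concrete description of exactly that colimit; your grading argument for why the glued object is again a nested graph is also the paper's argument. The genuine gap is at the step you dispose of with ``checking that these glue compatibly across simplices.'' The heart of the paper's proof is showing that the colimit of a simplicial diagram of contractions is again an \emph{admissible contraction}, and this is not automatic: the gluing can merge several contraction fibers into one, so a glued fiber could a priori contain two vertices (hence fail to be a corolla), and a node can come to decorate new irreducible flags after gluing, so condition 2 of admissibility can fail. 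The paper handles this in three steps: irreducible flags of the colimit are equivalence classes of irreducible flags of the pieces; a node of the colimit is a vertex of a $\phi$-fiber if and only if its pre-images are vertices of their $\overline{\phi}$-fibers --- and it is precisely here that the conditions (\ref{CondMorDep}), which hold for the structure squares of the input diagram because those squares are morphisms in $\mathfrak M$, are used as a \emph{hypothesis}; and finally, if a glued fiber had two vertices, the chain of identifications gluing their images would lift to a chain gluing the vertices themselves, a contradiction. Your appeal to the associativity proposition for $NGr$ cannot substitute for this: associativity of composition says nothing about whether fibers of a glued functor are corollas.

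Two of your concerns are also misplaced. The worry about naturality of the canonical decomposition of Lemma \ref{LemmaDecomposition} is a non-issue: objects of $\mathfrak M$ are already given as pairs (merger, contraction), and morphisms of $\mathfrak M$ are levelwise commutative diagrams, so a simplicial diagram in $\mathfrak M$ splits on the nose into a diagram of mergers followed by a diagram of contractions; no re-decomposition is ever chosen during the gluing. Moreover, the direction in which you use (\ref{CondMorDep}) is inverted relative to the paper: you treat it as the property to be \emph{verified} for the output square, which the paper disposes of quickly by noting that morphisms in $\underline{sset}(\mathfrak O)$ and $\underline{sset}(\mathfrak M)$ are fibrations in the sense of \cite{Bo10}, whereas its essential role is as an input hypothesis in the vertex--fiber correspondence above, i.e.\ precisely in the step your proposal is missing.
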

\begin{proof} It is easy to see that corollas are generators for the category $\mathfrak O$, and that morphisms with corollas as codomains are generators for $\mathfrak M$. Using these generators we define $\{\underline{sset}^k(\mathfrak O)\}_{k\geq 0}$, $\{\underline{sset}^k(\mathfrak M)\}_{k\geq 0}$. Since $\iota:\mathfrak O\rightarrow\mathfrak M$ and $\tau:\mathfrak M\rightarrow\mathfrak O$ map generators to generators, we have $\underline{sset}^k(\iota):\underline{sset}^k(\mathfrak O)\rightarrow\underline{sset}^k(\mathfrak M)$, $\underline{sset}^k(\tau):\underline{sset}^k(\mathfrak M)\rightarrow\underline{sset}^k(\mathfrak O)$ for all $k\geq 0$. It is straightforward to check that conditions (\ref{CondMorDep}) imply the same for $\sigma$. 

Now we define functors $\gamma_\mathfrak O:\underline{sset}(\mathfrak O)\rightarrow\mathfrak O$ and $\gamma_\mathfrak M:\underline{sset}(\mathfrak M)\rightarrow\mathfrak M$. Here we use the canonical action of $SSet$ on $Cat$, given by taking colimits. We have to prove that, taking a colimit of a simplicial diagram in $\mathfrak O$, we get a nested graph as the result. This becomes obvious if we choose a common grading on each member of the diagram, and hence the colimit becomes graded itself.

Given a simplicial diagram in $\mathfrak M$, we can write it as a composition of a diagram of mergers and a diagram of contractions. Taking colimits separately, we get three nested graphs and two functors between them. While it is straightforward to see that colimit of mergers is a merger, we have to prove that colimit of contractions is a contraction.

First we show that colimit of admissible functors is admissible. Let $\overline{\mathcal N}$ be an object in $\underline{sset}(\mathfrak O)$, and let $\mathcal N$ be its colimit (in $Cat$). It is easy to see that irreducible flags in $\mathcal N$ are given as equivalence classes of irreducible flags in members of $\overline{\mathcal N}$. Therefore, if $\overline{\phi}:\overline{\mathcal N}\rightarrow\overline{\mathcal N'}$ consists of admissible functors, the corresponding $\phi:\mathcal N\rightarrow\mathcal N'$ maps irreducible flags to irreducible flags. 

Next we make the following observation: let $A$ be a node in $\mathcal N$, if $A$ is a vertex in a $\phi$-fiber, then any pre-image of $A$ with respect to $\overline{\mathcal N}\rightarrow\mathcal N$ is a also a vertex in its $\overline{\phi}$-fiber. Moreover, conditions (\ref{CondMorDep}) imply that this is necessary and sufficient, i.e. if $A$ is not a $\phi$-vertex, none of its pre-images in $\overline{\mathcal N}$ is a $\overline{\phi}$-vertex. It immediately follows that $\phi$ is admissible, if every component of $\overline{\phi}$ is admissible.

Now suppose that every component of $\overline{\phi}$ is a contraction, but $\phi$ is not a contraction, i.e. there is a node $A\in\mathcal N'$, s.t. $\phi^{-1}(A)$ has at least two vertices, call them $B_1$ and $B_2$. Let $C_1$, $C_2$ be any pre-images of $B_1$, $B_2$ in $\overline{\mathcal N}$. Clearly $\overline{\phi}(C_1)$, $\overline{\phi}(C_2)$ have to be glued together in $\mathcal N'$. This gluing is performed by a sequence of nodes in $\overline{\mathcal N'}$, and taking pre-images of these nodes in $\overline{N}$, we conclude that $C_1$, $C_2$ have to be glued together as well. 

So far our construction was defined only for objects of $\underline{sset}(\mathfrak O)$ and $\underline{sset}(\mathfrak M)$. To extend it to functors we need to show that colimits of morphisms in $\underline{sset}(\mathfrak O)$, $\underline{sset}(\mathfrak M)$ are morphisms in $\mathfrak O$, $\mathfrak M$ respectively. This is a direct consequence of the definition of morphisms in $\underline{sset}(\mathfrak O)$, $\underline{sset}(\mathfrak M)$, i.e. them being fibrations (\cite{Bo10}).

Finally, associativity of $\gamma_\mathfrak O$, $\gamma_\mathfrak M$ follows immediately from associativity of the canonical action of $SSet$ on $Cat$.\end{proof}

\begin{definition} {\bf An abstract category of labeled nested graphs} is a double category $N\Gamma$, having a compatible (partial) action of $\underline{sset}$, and a double functor $\psi:N\Gamma\rightarrow NGr$, preserving the action of $\underline{sset}$, s.t. $\psi^{-1}(NGr)$ generates $N\Gamma$ as a $\underline{sset}$-algebra.

Given any double category $\mathcal G$, with an action of $\underline{sset}$, {\bf an $N\Gamma$-operad in $\mathcal G$} is a double functor $N\Gamma\rightarrow\mathcal G$, with contravariant components, preserving the action of $\underline{sset}$.\end{definition}

An example of a double category $\mathcal G$ having an action of $\underline{sset}$ is any category $\mathcal G$ with limits, considered as a double category, with the category of objects being $\mathcal G$ itself, and the category of morphisms being $Mor(\mathcal G)$ (i.e. objects are morphisms in $\mathcal G$, morphisms are commutative squares in $\mathcal G$). 

In this way we can recover the globular operads from \cite{Ba98} as a particular case of our construction.

\end{document}